\newtheorem{theorem}{Theorem}[section]
\newtheorem{lemma}[theorem]{Lemma}
\theoremstyle{definition}
\theoremstyle{remark}
\numberwithin{equation}{section}
\newcommand{\abs}[1]{\left\lvert#1\right\rvert}
\newcommand{\sumstack}[1]{\sum\limits_{\substack{#1}}}
\newcommand{\norm}[1]{\left\|#1\right\|}
\def\M{\mathfrak{M}}
\def\m{\mathfrak{m}}
  \def\A{\mathcal{A}}
\def\grB{{\mathfrak B}}
\def\grB{{\mathfrak B}}
\def\ep{\varepsilon}
\def\de{{\,{\rm d}}}
\newcommand{\sj}[1]{\sigma_{#1}}
\newcommand{\tj}[1]{\tau_{#1}}
\begin{document}
\title[Digitally Restricted Goldbach]{Digitally Restricted Sets and the Goldbach Conjecture: An Exceptional Set Result}
\author[James Cumberbatch]{James Cumberbatch}
\address{Department of Mathematics, Purdue University, 150 N. University Street, West 
Lafayette, IN 47907-2067, USA}
\email{jcumberb@purdue.edu}
\subjclass[2020]{11P32}
\keywords{Restricted Digits, Goldbach Conjecture, Exceptional Set}
\date{}
\dedicatory{}
\begin{abstract}We show that for any set $D$ of at least two digits in a given base $b$, there exists a $\delta(D,b)>0$ such that within the set $\A$ of numbers whose digits base $b$ are exclusively from $D$, the number of even integers in $\A$ which are less than $X$ and not representable as the sum of two primes is less than $|\A(X)|^{1-\delta}$.
\end{abstract}
\maketitle

\section{Introduction}
The Goldbach Conjecture states that for all sufficiently large even $n$ there exists $p_1$, $p_2$ such that
\begin{equation}
p_1+p_2=n.\label{Gold}
\end{equation}
In this paper, we study how many exceptions to the Goldbach conjecture can have restricted digits. Given a base $b$ and a set of digits $D\subseteq \{0,1,2,...,b-1\}$ with $|D|>2$ we define $\A_k$ to be the set of $k$-digit numbers whose base-$b$ representations only contain elements of $D$, more formally $\A_k$ is the set of numbers of the form
$$\sum_{i=0}^k a_i b^i$$
with $a_i\in D$.  We henceforth call sets $\A_k$ of this form, with general $k$ and $D$, {\it digitally restricted sets}, and similarly we call their elements digitally restricted numbers..  Let $\A$ be the union over $k$ of $A_k$, i.e. the set of digitally restricted integers with any number of digits. Let $E$ be the set of even $n$ which are not the sum of two primes and let $E(X)=E\cap\{1,...,X\}$. 
\begin{theorem}\label{diggold}  
There exists some $\delta=\delta(D,b)$ such that
$$|E(X)\cap \A| \ll |\A|^{1-\delta}.$$
\end{theorem}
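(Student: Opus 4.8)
The plan is to run the Hardy--Littlewood circle method and reduce the count of exceptions to a weighted mean square of the Goldbach counting function. Write $e(t)=e^{2\pi i t}$, set $S(\alpha)=\sum_{n\le X}\Lambda(n)e(n\alpha)$ with $\Lambda$ the von Mangoldt function, and for even $n$ let $r(n)=\int_0^1 S(\alpha)^2e(-n\alpha)\,d\alpha=\sum_{m_1+m_2=n}\Lambda(m_1)\Lambda(m_2)$. If $n\in E$ then $r(n)$ is fed only by prime powers, so $r(n)\ll X^{1/2+o(1)}$ and hence $|r(n)-\mathfrak{S}(n)n|\gg\mathfrak{S}(n)n$, where $\mathfrak{S}$ is the singular series. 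It is enough to treat a single digit length: fix $k=\lfloor\log_b X\rfloor$, so that each $n\in\A_k$ has $n\asymp X$ and, being even, $\mathfrak{S}(n)n\gg X$; the statement for $\A(X)$ then follows by summing over $k$, the geometric growth $|\A_k|\asymp|D|^{k}$ making the top length dominate. We are thus reduced to establishing
\[
\sum_{n\in\A_k}\bigl|r(n)-\mathfrak{S}(n)n\bigr|^2\ll X^2\,|\A_k|^{1-\delta},
\]
since this yields $|E\cap\A_k|\ll|\A_k|^{1-\delta}$ at once.

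The distinctive feature of $\A_k$ is the factorisation of its generating function,
\[
F_k(\theta)=\sum_{n\in\A_k}e(n\theta)=\prod_{i=0}^{k}\phi(b^i\theta),\qquad\phi(t)=\sum_{d\in D}e(dt),
\]
and everything hinges on the decay of $F_k$. Dissect $[0,1)$ into major arcs $\mathfrak{M}$ about rationals $a/q$ with $q\le Q=X^{\nu}$ ($\nu$ small) and complementary minor arcs $\mathfrak{m}$. On $\mathfrak{M}$ the usual approximation of $S$ contributes the main term $\mathfrak{S}(n)n$ plus an error $\mathcal{E}(n)$; on $\mathfrak{m}$ there remains $\int_{\mathfrak{m}}S(\alpha)^2e(-n\alpha)\,d\alpha$. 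Squaring and summing over $n\in\A_k$ converts each piece into a bilinear integral against $F_k$; for the minor arcs,
\[
\sum_{n\in\A_k}\Bigl|\int_{\mathfrak{m}}S(\alpha)^2e(-n\alpha)\,d\alpha\Bigr|^2=\int_{\mathfrak{m}}\int_{\mathfrak{m}}S(\alpha)^2\,\overline{S(\beta)^2}\,F_k(\beta-\alpha)\,d\alpha\,d\beta.
\]

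The bound $|F_k|\le F_k(0)=|\A_k|$ merely reproduces the classical mean square and is a power of $X$ too weak, so the gain must come from the decay of $F_k(\beta-\alpha)$ off its resonances, of which there are three regimes. When $\beta-\alpha$ lies in the central peak, of width $\asymp X^{-1}$, one bounds $|F_k|\le|\A_k|$ and uses $\int_{\mathfrak{m}}|S|^4\le(\sup_{\mathfrak{m}}|S|^2)\int_0^1|S|^2\ll X^{3-2\eta+o(1)}$, where $\sup_{\mathfrak{m}}|S|\ll X^{1-\eta}$ is Vinogradov's minor-arc bound; this contributes $\ll X^{-1}|\A_k|\int_{\mathfrak{m}}|S|^4\ll|\A_k|X^{2-2\eta}$, comfortably within target. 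When $\beta-\alpha$ is far from every rational of small denominator the decisive digital input applies: such $\theta$ forces $\phi(b^i\theta)$ to be bounded away from $|D|$ in modulus for all small $i$, giving the pointwise bound $|F_k(\beta-\alpha)|\ll|\A_k|Q^{-\delta_1}$, and the regime contributes $\ll|\A_k|Q^{-\delta_1}\bigl(\int_0^1|S|^2\bigr)^2\ll X^{2+o(1)}|\A_k|Q^{-\delta_1}$. The main obstacle is the intermediate regime, where $\beta-\alpha$ approaches a nonzero $b$-adic rational $a/b^s$ of denominator at most $Q$: there $F_k$ carries a secondary peak, rescaled from the central one, of height governed by $P(a,s)=\prod_{l=1}^{s}|\phi(a/b^l)|$, which may reach size $|\A_k|$ for arithmetically special $a$. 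Controlling the $O(Q)$ such peaks requires a quantitative decay estimate $P(a,s)\ll|D|^{(1-\delta_2)s}$ for the pertinent residues, obtained from Weyl-type and van der Corput-type bounds for the digit sums $\phi$, together with the fact that shifting a minor-arc point by $a/b^s$ keeps it on the minor arcs, so that the shifted factor of $S$ is again small; summing the geometric series in $s$ keeps the total $\ll X^2|\A_k|^{1-\delta}$. It is precisely this digital estimate, not any input about primes, that fixes the admissible $\delta=\delta(D,b)$ and forces $|D|\ge2$.

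For the major arcs the error $\mathcal{E}(n)$ is, through the explicit formula, a sum over zeros of Dirichlet $L$-functions, and its weighted mean square $\sum_{n\in\A_k}|\mathcal{E}(n)|^2$ expands as a bilinear form in $F_k$ over pairs of such zeros. The power saving here is extracted exactly as in the Montgomery--Vaughan exceptional-set theorem from a log-free zero-density estimate, the digital weight $F_k$ only helping, since it localises the off-diagonal contributions to the same $b$-adic resonances already analysed. Combining the two arc estimates gives the displayed mean-square bound, hence $|E\cap\A_k|\ll|\A_k|^{1-\delta}$, and summation over $k\le\log_b X$ completes the proof. I expect the genuine difficulty to be the decay of the digit-sum products $P(a,s)$ at $b$-adic rationals: this is the one ingredient truly tied to the digit set $D$ and the base $b$, and it is what ultimately determines the value of $\delta$.
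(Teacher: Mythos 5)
Your reduction of the minor-arc contribution to the bilinear form $\int_{\m}\int_{\m}S(\alpha)^2\overline{S(\beta)^2}\,F_k(\beta-\alpha)\,d\alpha\,d\beta$ is a genuinely different route from the paper, which instead bounds the \emph{first} moment $\sum_{n\in\A_k}|r(n,\m)|$ using the sign kernel $K(\alpha)=\sum\eta(n)e(-n\alpha)$, H\"older's inequality, Vaughan's minor-arc bound, and the high-moment estimate of Lemma \ref{lpbound}. But your version has a genuine gap exactly where you place the weight of the proof: the claimed decay $P(a,s)\ll|D|^{(1-\delta_2)s}$ of the secondary peaks is false as stated. Take $a=1$: then $P(1,s)=\prod_{l=1}^{s}|\phi(b^{-l})|$, and since $|\phi(b^{-l})|\ge |D|-2\pi(b-1)b^{-l}$, this product is $\gg_{b,D}|D|^{s}$ whenever $\phi(1/b)\neq 0$ (e.g.\ $b=10$, $D=\{0,1\}$). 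Thus $F_k$ has secondary peaks of essentially full height $\asymp|\A_k|$ at every point $1/b^{s}$, and more generally at $a/b^{s}$ for every bounded $a$, so no pointwise bound of your shape can hold uniformly over the residues you must sum over. What the argument actually needs is distributional information --- a bound on $\sum_{a \bmod b^{s}}P(a,s)$, equivalently on how many $a$ satisfy $P(a,s)>|D|^{(1-\delta)s}$ --- and that is an $L^1$-type estimate for digital generating functions of the kind forming the hard core of Maynard's work; it is not produced by Weyl or van der Corput differencing, which do not apply to products of digit sums. The ingredient you defer (``for the pertinent residues'') is precisely the unproved core, and the paper's point is that one can avoid it entirely: Lemma \ref{lpbound} is proved by elementary digit-by-digit independence, and H\"older converts it into the minor-arc bound of Lemma \ref{minarc} with no pointwise analysis of $F_k$ whatsoever.

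The second gap is on the major arcs. You assert that the Montgomery--Vaughan mean-square treatment of $\mathcal{E}(n)$ goes through over $\A_k$, ``the digital weight only helping.'' This does not follow: Montgomery--Vaughan bounds $\sum_{n\le X}|\mathcal{E}(n)|^2$, and a bound for a sum over all $n$ gives no proportional bound for its restriction to a sparse subset --- a priori the exceptional $n$ could all lie in $\A_k$, which is exactly what Theorem \ref{diggold} must rule out. The enemy is a possible exceptional (Siegel) modulus $\tilde{r}$: its zero makes the major-arc error large precisely for those $n$ with $(n,\tilde{r})$ large, so one must show that few elements of $\A_k$ are divisible by any fixed large divisor $d\mid\tilde{r}$. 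That is a nontrivial fact about digital sets; it is the paper's Lemma \ref{sievebound} ($\#\{n\in\A_k:d\mid n\}\ll|\A_k|d^{-\log|D|/\log b}$, proved by resampling the low digits), and it is the single step where the paper's major-arc argument (following Br\"udern--Kawada--Wooley, replacing the step between their (18) and (19)) differs from the unrestricted case. Your sketch contains no substitute for this ingredient, so the claimed bound $\sum_{n\in\A_k}|\mathcal{E}(n)|^2\ll X^2|\A_k|^{1-\delta}$ is unjustified. (There is also a smaller slip: when $0\in D$ the sets $\A_j$ are nested, so ``summing over $k$'' double counts and elements of $\A_k$ need not satisfy $n\asymp X$; the paper treats this technicality explicitly.)
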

To see the relevance of this result, we look at historical results on the Goldbach conjecture. First conjectured by Goldbach in 1742, little progress occurred until the work of Vinogradov \cite{Vinogradov1937} in 1937, who showed that all sufficiently large odd numbers are the sum of three primes. Heilbronn\cite{Heilbronn} noted in his review that one could use the same method to prove almost all even numbers the sum of two primes, and within a year Estermann \cite{Estermann}, van der Corput\footnote{Although van der Corput's paper shows up in the 1936 edition of Acta Arithmetica, it was not submitted until 1937} \cite{Corput} and Chudakov\cite{Chudakov37} all independently did so. The initial bound of $|E(X)| \ll X(\log X)^{-c}$ has since been improved, most notably by Montgomery and Vaughan in \cite{MVaughan} to $|E(X)|\ll X^{1-\delta}$ for some $\delta>0$, and the current best $\delta$ which can be found in the literature is $\delta=0.28$ by Pintz in \cite{Pintz72}. When considering digitally restricted sets with few allowed digits, there are significantly less than $X^{0.72}$ numbers with restricted digits less than $X$, and thus Theorem \ref{diggold} is nontrivial.

Another motivation for our result is previous study of how many exceptions lie in other specific thin sets. This was first studied by Perelli in \cite{Perelli}, who showed that for any polynomial $\phi$ of degree $k$ with integer coefficients the values of $2\phi(n)$ obey the Goldbach conjecture for all but 
$N(\log N)^{-A}$ many $n<N$. This estimate was improved by Br\"udern, Kawada, and Wooley in \cite{BKW} who lowered the bound on the exceptions to $N^{1-c/k}$ for some positive absolute constant $c$. 

Also motivating our result is work studying the set $\A$. The analytic structure of $\A$ has led to many results. Particularly notable results include Maynard\cite{Maynard} who proved that $\A$ has infinitely many primes provided that the digit set $D$ omits sufficiently few digits, and Mauduit and Rivat\cite{MR} who proved that sums of digits of primes are well-distributed in residue classes.

\section{Preliminaries}
We first introduce notation. As is common in analytic number theory, let $e(\alpha):=e^{2\pi i\alpha}$ and let the symbol $f(x)\ll g(x)$ mean that there exists a positive constant $c$ such that $|f(x)|\leq cg(x)$ after restricting to an appropriate domain. Whenever we uses the letter $\ep$, we mean that the statement is true for every $\ep>0$. Throughout this paper, the letter $p$ always denotes primes. Take $k$, the number of digits, to be sufficiently large, and let $X=b^k$, so that $\A_k$ has only values less than $X$.

When discussing digitally restricted sets we need to deal with the technicality of leading zeroes. We want to study all digitally restricted numbers less than $X$, of any number of digits, but when $0\not\in D$ we see that $\A_k$ only contains numbers with precisely $k$ digits and when $j< k$ we have that $\A_j\cap\A_{k}=\emptyset$, in contrast to the case where $0\in D$, in which a $j$-digit number is also a $k$-digit number with $k-j$ leading zeroes, and $\A_j\subset\A_k$. When $0\in D$ we may show that $|\A_k\cap E(X)|$ is small to obtain the desired result, but when $0\not\in D$ we note that $|\A\cap E(X)| = \sum_{j\leq k}|\A_k\cap E(X)|$, so proving the result for $\A_k$ will prove it for $\A(X)$ in either case, and we will henceforth deal only with $\A_k$.

Now we define the generating function for $\A_k$ to be
$$f(\alpha) := \sum_{n\in\A_k} e(n\alpha).$$
The generating function for primes on the other hand will require a parameter, let $\delta_0$ be a sufficiently small constant, which may depend on $b$, to be chosen later. We can now define 
$$S(\alpha) := \sum_{X^{6\delta_0}<p<X} e(p\alpha)\log p.$$
For any $\grB\subset [0,1]$ and any even $n<X$, we now define
$$r(n,\grB):=\int_\grB S(\alpha)^2e(-n\alpha)\de\alpha.$$ By orthogonality, we know that $r(n,[0,1])$ counts solutions to $p_1+p_2=n$ with weight $\log p_1\log p_2$. To evaluate this integral we apply the Hardy-Littlewood circle method. Define the major arcs to be
\begin{equation}
\M_{\delta_0} := \bigcup_{0<q<X} \{\alpha\in [0,1]:\norm{q\alpha}<X^{6\delta_0-1}\} \label{MajDef}
\end{equation}
and the minor arcs to be
\begin{equation}\m_{\delta_0} := [0,1]\setminus \M_{\delta_0}. \label{MinDef}\end{equation}
In section 3 we establish Lemmas \ref{orthocount} and \ref{sievebound}, the facts about $f(\alpha)$ and $\A_k$ which we will use in the proof. In section 4 we combine section 3 with commonly known facts about $S$ to prove Theorem \ref{diggold}.

\section{Properties of digital restrictions}
In this section, we prove the properties of digitally restricted sets which we will need to show Theorem \ref{diggold}. The two results we need are Lemma \ref{lpbound}, a mean value estimate, and Lemma \ref{sievebound}, a bound on how many elements of $\A_k$ can be multiples of a given large value.
\begin{lemma}\label{lpbound} For all  $\delta>0$, there exists a  $t_0=t_0(b,\delta)$ such that whenever $t\geq t_0$ we have the bound
$$\int_0^1 |f|^t\de \alpha \ll |\A_k|^tX^{\delta-1}.$$
\end{lemma}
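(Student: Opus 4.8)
The plan is to reduce the statement for general real $t$ to a single even integer moment, to evaluate that moment by orthogonality, and then to control it through a digit-by-digit carry recursion whose growth rate is pinned down by an equidistribution estimate modulo $b$. Since the digits of elements of $\A_k$ are chosen independently, $f$ factors as
$$f(\alpha)=\prod_{i=0}^{k}g(b^i\alpha),\qquad g(\beta):=\sum_{a\in D}e(a\beta),$$
and in particular $\|f\|_\infty=f(0)=|\A_k|$. Hence for any even integer $2s\le t$,
$$\int_0^1|f|^t\,\de\alpha\le\|f\|_\infty^{\,t-2s}\int_0^1|f|^{2s}\,\de\alpha=|\A_k|^{\,t-2s}\int_0^1|f|^{2s}\,\de\alpha.$$
It therefore suffices to produce, for each $\delta\in(0,1)$ (the case $\delta\ge1$ being trivial), one even integer $2s_0=2s_0(b,\delta)$ with $\int_0^1|f|^{2s_0}\,\de\alpha\ll|\A_k|^{2s_0}X^{\delta-1}$; the lemma then holds with $t_0=2s_0$, the surplus $|\A_k|^{\,t-2s_0}$ being exactly what is needed.

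To handle the even moment I would use orthogonality,
$$\int_0^1|f|^{2s}\,\de\alpha=\#\bigl\{(n_1,\dots,n_{2s})\in\A_k^{2s}:\ n_1+\cdots+n_s=n_{s+1}+\cdots+n_{2s}\bigr\}.$$
Writing $n_j=\sum_i a_{j,i}b^i$ and $c_i:=\sum_{j\le s}a_{j,i}-\sum_{j>s}a_{j,i}$, this is the number of digit choices with $\sum_{i=0}^k c_ib^i=0$. Setting $d_0:=\gcd\{a-a':a,a'\in D\}$, each $c_i=d_0w_i$ and the equation becomes $\sum_i w_ib^i=0$. Since distinct positions involve disjoint variables, the count factors along a carry recursion: with $P(w)$ the number of position-$i$ digit tuples producing reduced sum $w$ (so $\sum_wP(w)=|D|^{2s}$) and carries $h_i$ obeying $h_0=h_{k+1}=0$ and $w_i=bh_{i+1}-h_i$, the moment equals $(T^{k+1})_{0,0}$ for the finite transfer matrix $T_{h',h}=P(bh'-h)$.

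The remaining task is to bound the spectral size of $T$. Using $(T^{k+1})_{0,0}\le\|T\|_{1\to1}^{\,k+1}$, with $\|T\|_{1\to1}$ the maximal column sum, it is enough to show $\max_{r}\sum_{w\equiv r\,(b)}P(w)\le|D|^{2s}b^{\delta-1}$. Expanding the congruence in additive characters modulo $b$ gives
$$\sum_{w\equiv r\,(b)}P(w)=\frac{|D|^{2s}}{b}\sum_{\xi=0}^{b-1}e(-\xi r/b)\,|\hat\mu(\xi)|^{2s},\qquad \hat\mu(\xi):=\frac1{|D|}\sum_{e}e(e\xi/b),$$
where $e$ runs over the reduced digit set; hence each column sum is at most $\tfrac{|D|^{2s}}{b}\bigl(1+(b-1)\eta^{2s}\bigr)$ with $\eta:=\max_{1\le\xi\le b-1}|\hat\mu(\xi)|$. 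The reduction by $d_0$ is precisely what forces $\eta<1$: the reduced digits have difference-gcd $1$, so $|\hat\mu(\xi)|=1$ occurs only at $\xi\equiv0$. Choosing $s_0$ so large that $(b-1)\eta^{2s_0}\le b^{\delta}-1$ then yields $\|T\|_{1\to1}\le|D|^{2s_0}b^{\delta-1}$, and since $(\delta-1)(k+1)\le(\delta-1)k$ we get $(|D|^{2s_0}b^{\delta-1})^{k+1}\le|\A_k|^{2s_0}X^{\delta-1}$, as required.

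The main obstacle is this final estimate: extracting a column-sum bound with the essentially optimal rate $b^{-1}$ rather than the far weaker $|D|^{-1}$ that a sup-norm or triangle-inequality bound on $T$ would give, the latter being too lossy to reach small $\delta$ when $D$ is thin. The crucial and easily mishandled subtlety is that one must divide out the common factor $d_0$ of the digit differences \emph{before} the relevant digit sums equidistribute modulo $b$; without this reduction $\hat\mu$ can have modulus $1$ at nonzero frequencies (for instance $D=\{0,b/2\}$), the equidistribution fails, and the whole spectral bound collapses.
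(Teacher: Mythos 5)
Your proposal is correct, and at the level of strategy it matches the paper's proof: reduce to an even moment, count solutions by orthogonality, exploit independence of the digit positions, and control each position by a character-sum (equidistribution) estimate modulo $b$. Your transfer-matrix formalism is only cosmetically different from the paper's argument, which conditions on the digits already chosen and bounds the number of tuples whose digit sum lies in a prescribed residue class modulo $b$: your maximal column sum $\max_r\sum_{w\equiv r\,(b)}P(w)$ is exactly the paper's $\max_n u(n,b,D)$, and both yield the per-digit factor $\tfrac{|D|^{2s}}{b}\bigl(1+(b-1)\eta^{2s}\bigr)$. However, two of your choices differ substantively, and both are improvements. First, the normalization: the paper divides out only the gcd of the digits themselves and then asserts that coprimality of the digits forces $\abs{\sum_{d\in D}e(da/b)}<|D|$ for every $a\not\equiv 0\pmod{b}$. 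That assertion is false --- take $b=4$, $D=\{1,3\}$, where the digits are coprime but the sum at $a=2$ has modulus $2=|D|$; the digit differences share the factor $2$, so the digit sums $\sigma_j$ occupy only even residues modulo $4$ and the paper's per-digit bound fails. Your affine reduction --- translate by a fixed digit, then divide by $d_0=\gcd\{a-a':a,a'\in D\}$ --- is the correct one, since it is the differences, not the digits, that govern equidistribution of $\sigma_j$ modulo $b$, and your observation that $|\hat\mu(\xi)|=1$ forces $\xi\equiv 0$ once the difference-gcd is $1$ is precisely the needed statement; your proof therefore repairs a genuine flaw in the paper's. Second, the paper proves the bound only for even integers $t=2s$, though the lemma is stated for all $t\ge t_0$; your interpolation $\int_0^1|f|^t\,\de\alpha\le\|f\|_\infty^{t-2s_0}\int_0^1|f|^{2s_0}\,\de\alpha$ with $\|f\|_\infty=|\A_k|$ closes that (minor) gap. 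The only point you leave implicit is that the carries are bounded (by $s$, say), so that $T$ is genuinely a finite matrix; this follows at once from $|w_i|\le s(b-1)$ and the carry recursion, but it deserves a line.
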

\begin{proof} 
We prove this by exploiting the independence of digits of values in $\A_k$.  Note that by orthogonality, if $t=2s$ is an even integer, this integral counts solutions to the equation
\begin{equation}\label{orthocount}
\sum_{i=1}^{s}(x_i-y_i)=0
\end{equation}
with $x_i,y_i\in \A_k$. We now set each digit of each value to its own variable. Let
$$x_i = \sum_{j=0}^k x_{i,j}b^j$$
and the same for $y_i$. We are now counting solutions to
\begin{equation}\label{orthocountdig}
\sum_{j=0}^k b^j\sum_{i=1}^{s}(x_{i,j}-y_{i,j})=0
\end{equation}
with $x_{i,j},y_{i,j}\in D$.

First, if every allowed digit shares a common factor then we know that every $x_{i,j}$ and $y_{i,j}$ will share that common factor, permitting us to divide everything by that common factor. Thus, we may assume that the allowed digits do not share a common factor. To count solutions to (\ref{orthocountdig}) we investigate what addition looks like base-$b$, and in particular we look at each digital place. Let 
$$\sj{j} : = \sum_{i=1}^s (x_{i,j}-y_{i,j})$$
and
$$\tj{j} := \sum_{l<j}b^l\sj{l}.$$
Now, we note that for (\ref{orthocountdig}) to hold, we require that it holds (mod $b$), and thus $\sigma_0\equiv 0\mod b$, or to be precise, the 1s digits must add up to 0 mod $b$. Further, we know that \eqref{orthocountdig} holds mod $b^j$ for any $j$, meaning that $\tj{j} \equiv -b^j \sj{j} \mod b^{j+1}$. After we have chosen the digits up to $j$ we will thus require that $\sj{j} \equiv n$ mod $b$ for some fixed $n$ which is determined by the digits up to $j$. 

No matter what the digits in the other places are, we will thus have at most $$\max_{n<b}\#\{(x_{0,j},x_{1,j},...,x_{s,j},y_{0,j},...,y_{s,j}):\sj{j}\equiv n \mod b\}$$ choices for how to assign the $j$th digits. By orthogonality, we can count these using the function
\begin{equation}\label{onedig}
u(n,b,D):=\frac{1}{b}\sum_{0\leq a<b} \left(\left|\sum_{d\in D}e(da/b)\right|^{2s}e(-na/b)\right).
\end{equation}
Noting that this does not depend on $j$, we thus have that an upper bound on the number of solutions to (\ref{orthocountdig}) is the product of the upper bounds on the sums of each digit, which is bounded by the maximum of $u(n,b,D)^k$ over all possible $n$. Finally, noting that by coprimality of acceptable digits we have for all nonzero $a$ that
$$\abs{\sum_{d\in D}e(da/b)}<|D|.$$
Thus by raising to a sufficiently large $s$ we can make the ratio as small as we like, and thus make the sum over all values of $a$ as small as we like (while remaining above 1), obtaining that for a sufficiently large $s$ we have
$$u(b;D)=\frac{1}{b}\sum_{0\leq a<b}\abs{\sum_{d\in D}e(da/b)}^{2s} \leq \frac{(1+\delta )}{b}|D|^{2s}\leq \frac{1}{b}|D|^{2s+\delta}.$$
Taking the product across all digital places thus yields the desired result.
\end{proof}
\begin{lemma}\label{sievebound}
For any $m$, we have:
$$\#\{n\in\A_k:m| n\}\leq 2b\frac{|\A_k|}{m^{\log|D|/\log b}}.$$
\end{lemma}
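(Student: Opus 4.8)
The plan is to exploit the digit structure directly rather than any analytic input: the multiples of $m$ in $\A_k$ must be sparse because, once the high-order digits are fixed, the remaining freedom lives in an interval too short to contain many multiples of $m$.

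First I would set $r=\lceil \log m/\log b\rceil$, so that $b^{r-1}<m\le b^r$, and I will assume $m\le b^{k+1}$, the only range in which $\A_k$ can contain a nonzero multiple of $m$. Group the elements of $\A_k$ according to the choice of the top digits $a_r,a_{r+1},\dots,a_k$; there are $|D|^{k+1-r}$ such groups. Within a single group the high part $H=\sum_{i=r}^k a_i b^i$ is fixed, and each element is $n=H+L$ with $L=\sum_{i=0}^{r-1}a_i b^i$. Since every digit is at most $b-1$, the low part satisfies $L\in\{0,1,\dots,b^r-1\}$, so every element of the group lies in the block of $b^r$ consecutive integers $\{H,H+1,\dots,H+b^r-1\}$.

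The key step is then a counting bound inside each block: a set of $b^r$ consecutive integers contains at most $b^r/m+1$ multiples of $m$, and since $m>b^{r-1}$ we have $b^r/m<b$, so there are fewer than $b+1$, hence at most $2b$, multiples of $m$ in the block. Crucially this counts \emph{all} integer multiples of $m$ in the block, so it is certainly an upper bound for the number of digitally restricted ones. Summing over the $|D|^{k+1-r}$ groups gives
$$\#\{n\in\A_k:m\mid n\}\le 2b\,|D|^{k+1-r}.$$

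Finally I would convert this into the stated form. Recalling $|\A_k|=|D|^{k+1}$ and using $m\le b^r$ together with $\log|D|/\log b\ge 0$, we get $m^{\log|D|/\log b}\le (b^r)^{\log|D|/\log b}=|D|^r$, so that $|D|^{-r}\le m^{-\log|D|/\log b}$. Hence $2b\,|D|^{k+1-r}=2b\,|\A_k|\,|D|^{-r}\le 2b\,|\A_k|/m^{\log|D|/\log b}$, which is exactly the claim. I do not expect a serious obstacle here; the only delicate point is the choice of the splitting index $r$, which must be large enough that the block length $b^r$ exceeds $m$ (guaranteeing few multiples per block) yet small enough that $|D|^{-r}$ still beats $m^{-\log|D|/\log b}$. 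The choice $r=\lceil \log m/\log b\rceil$ balances these two requirements exactly.
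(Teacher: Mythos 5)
Your proof is correct and rests on the same mechanism as the paper's: the last $r\approx \log m/\log b$ digits confine elements of $\A_k$ sharing their high digits to an interval of length $b^r\approx m$, which contains $O(b)$ multiples of $m$ yet accounts for $|D|^r\geq b^{-1}m^{\log|D|/\log b}$ elements of $\A_k$. The only difference is organizational: the paper runs the same count dually (each multiple of $m$ in $\A_k$ has $\geq b^{-1}m^{\log|D|/\log b}$ digit-neighbors within distance $m$, while each element of $\A_k$ lies within $m$ of at most two multiples), whereas you partition $\A_k$ by high digits and count multiples per block; your use of the ceiling $r=\lceil \log m/\log b\rceil$ even avoids the factor of $b$ the paper concedes to the floor, though both arguments land on the same constant $2b$.
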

\begin{proof}
Note that for every integer of the shape $am$ in $\A_k$, by changing the last $\left\lfloor\frac{\log m}{\log b}\right\rfloor$ many digits we can obtain $|D|^{\left\lfloor\frac{\log m}{\log b}\right\rfloor} \geq b^{-1}m^{\frac{\log|D|}{\log b}}$ many other values in $\A_k$ which are less than $m$ away from $am$. Since any value in $\A_k$ can be less than $m$ away from at most two integers of the shape $am$, the proportion of values in $\A_k$ which are of the shape $am$ can be at most $$\frac{2b}{m^{\frac{\log|D|}{\log b}}}.$$ Thus we obtain that there are at most $\frac{2b|\A_k|}{m^{\log|D|/\log b}}$ distinct integers of the shape $am$ which can be achieved in $|\A_k|$. 
\end{proof}
Since the expected number of multiples of $m$ lying in $\A_k$ would be $|\A_k|/m$, it is natural to ask whether we can obtain this or possibly $|\A_k|/m^c$ for some absolute $c$ which does not depend on $b$ and $|D|$. Unfortunately, this is ruled out by $m$ which are multiples of powers of $b$. Taking $m=b^j$, we see that $n\in \A_k$ being a multiple of $m$ is equivalent to the last $j$ digits being 0, and that when 0 is an allowed digit there will be $N/N^{j/k}=N/m^{\log|D|/\log b}$ of these. For values $m$ which are coprime to $b$, a significant amount of research has been done, see \cite{RestrictedDigitsModM}, but this research only applies to small values of $m$ and not the large values we will need. The question of whether or not there exists a constant $c=c(b)$ such that for all $m$ coprime to $b$, all residue classes mod $m$ have an element with restricted digits less than $m^c$ is open.
\section{Exceptional sets in Goldbach's Conjecture}
In this section, following \cite{BKW}, we use what we have proven about numbers with restricted digits to prove Theorem \ref{diggold}. We now study $r(n,\grB)$ as we defined it in the outline. In particular, we study $r(n,\M)$ and $r(n,\m)$ as defined in \eqref{MajDef} and \eqref{MinDef} respectively. Our goal will be to show for almost all $n$ in $\A_k$, that $|r(n,\m)|\ll X^{1-\delta_1}$ and $r(n,\M)\gg X^{1-\delta_2}$ with $\delta_1<\delta_2$. First, we handle the minor arcs.
\begin{lemma}\label{minarc}
There exists $\delta_1=\delta_1(b,D)>0$ such that for all sufficiently large $X$, there exists a set $B\subset \A_k$ with $|B|\ll |\A_k|X^{-\delta_1}$ and for all even $n$ in $\A_k\setminus B$ we have
$$|r(n,\m)|\ll |\A_k|X^{-\delta_1}$$
\end{lemma}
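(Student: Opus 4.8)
The plan is to bound the second moment of $r(n,\m)$ over $n\in\A_k$ and then extract the exceptional set by Markov's inequality. Set $Z:=\sum_{n}|r(n,\m)|^2$, the sum running over even $n\in\A_k$; the parity restriction is encoded by $\tfrac12(1+e(n/2))$, which below simply replaces $f$ by $f$ together with its half-shift $\gamma\mapsto f(\gamma+\tfrac12)$, a function of the same digital shape and size, so I suppress it. The key reduction is this: if I can prove the second-moment estimate $Z\ll |\A_k|X^{2-2\eta+\ep}$ for some $\eta=\eta(b,D)>0$, then fixing a threshold $T=X^{1-\delta_1}$ and putting $B:=\{n\in\A_k:|r(n,\m)|>T\}$, Markov's inequality gives $|B|\le Z/T^2\ll |\A_k|X^{-2\eta+2\delta_1+\ep}$, which is $\ll |\A_k|X^{-\delta_1}$ as soon as $\delta_1<2\eta/3$ and $X$ is large. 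For $n\in\A_k\setminus B$ one then has $|r(n,\m)|\le T$, the minor-arc bound in the form used in the outline of this section (and more than enough to be dominated by the major-arc main term). Everything therefore reduces to the second-moment estimate for $Z$.

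Expanding the square and carrying out the sum over $n\in\A_k$ by orthogonality, the point is that $\sum_{n\in\A_k}e(n(\beta-\alpha))=f(\beta-\alpha)$, so that
\[
Z=\int_\m\int_\m S(\alpha)^2\,\overline{S(\beta)^2}\,f(\beta-\alpha)\,\de\alpha\,\de\beta.
\]
I would bound this using three inputs: Vinogradov's minor-arc estimate for exponential sums over primes, giving $\sup_{\alpha\in\m}|S(\alpha)|\ll X^{1-\eta}$ from the definition of $\m_{\delta_0}$; the mean value $\int_0^1|S|^2\de\alpha\ll X\log X$ from the prime number theorem; and the two facts about $f$ from Section 3, namely the concentration estimate of Lemma \ref{lpbound} and the distribution bound of Lemma \ref{sievebound}. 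I would split the integral according to the size of $f(\beta-\alpha)$, equivalently according to whether $\beta-\alpha$ lies near the origin or near a rational with small denominator. On the diagonal, where $f(\beta-\alpha)$ is of order its maximum $|\A_k|$ only on a set of measure $\approx X^{-1}$, I bound one factor $|S(\beta)|^2$ by $\sup_\m|S|^2\ll X^{2-2\eta}$ and integrate the localized mass of $f$ against $|S(\alpha)|^2$ using $\int_0^1|S|^2\ll X\log X$; this contributes $\ll|\A_k|X^{2-2\eta+\ep}$, exactly the target size.

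The main obstacle is the off-diagonal contribution. Recall that $f(\gamma)=\prod_{j}\bigl(\sum_{d\in D}e(db^j\gamma)\bigr)$ has many secondary peaks near rationals $a/q$, and the crude estimate $\int_0^1|f|\le|\A_k|^{1/2}$ is far too lossy to control their combined effect: it would force an $\eta$ so large as to be unavailable. Here I would use Lemma \ref{sievebound} to bound the height of the peak of $f$ near each $a/q$ in terms of the number of elements of $\A_k$ in the corresponding residue class, and Lemma \ref{lpbound} to control the total measure on which $|f|$ is large, pairing each such region with the minor-arc decay $\sup_\m|S|\ll X^{1-\eta}$ so that no region contributes more than the diagonal does. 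The delicate point, and the \emph{crux} of the whole argument, is that one must genuinely combine the arithmetic thinness of $\A_k$ with the analytic smallness of $S$ on $\m$ \emph{within the same region}, rather than bounding the $f$-mass and the $S$-size separately; doing the two separately reintroduces a factor like $\int_0^1|f|$ and destroys the $X^{-2\eta}$ saving. Once the off-diagonal total is also shown to be $\ll|\A_k|X^{2-2\eta+\ep}$, the second-moment bound $Z\ll|\A_k|X^{2-2\eta+\ep}$ holds, and the Markov step described above completes the proof with any $\delta_1<2\eta/3$.
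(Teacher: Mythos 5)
Your reduction to a second-moment estimate is internally coherent: the identity
$$Z=\int_\m\int_\m S(\alpha)^2\,\overline{S(\beta)^2}\,f(\beta-\alpha)\,\de\alpha\,\de\beta$$
is correct, the Markov step at the end is correct, and your diagonal bound is fine. The genuine gap is that the off-diagonal contribution --- which you yourself call the crux --- is never actually estimated, and the tools you name cannot estimate it. Lemma \ref{sievebound} counts only the elements of $\A_k$ divisible by $m$, i.e.\ the zero residue class; the height of a peak of $f$ near a rational $a/q$ is governed by the distribution of $\A_k$ over \emph{all} residue classes, since $f(a/q)=\sum_{r=0}^{q-1}e(ar/q)\,\#\{n\in\A_k:n\equiv r\mmod{q}\}$, and the paper explicitly remarks after Lemma \ref{sievebound} that controlling these classes for large moduli coprime to $b$ is an open problem. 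So the input your peak-by-peak decomposition would need is not available, and the central claim $Z\ll|\A_k|X^{2-2\eta+\ep}$ remains unproven; correctly identifying the difficulty (``one must combine the thinness of $\A_k$ with the smallness of $S$ in the same region'') is not the same as resolving it.

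The missing mechanism is not a geometric decomposition but a H\"older step that places the digital generating function in $L^t$, which is exactly where Lemma \ref{lpbound} supplies the concentration factor $X^{\delta_0-1}$. This is how the paper argues, and it does so on the \emph{first} moment, which is simpler than your $Z$: setting $\eta(n)=\mathrm{sign}(r(n,\m))$ and $K(\alpha)=\sum_{n\in\A}\eta(n)e(-n\alpha)$, one has $\sum_{n\in\A_k}|r(n,\m)|=\int_\m S(\alpha)^2K(-\alpha)\,\de\alpha$, and H\"older bounds this by
$$\left(\sup_{\alpha\in\m}|S(\alpha)|\right)^{2/t}\left(\int_0^1|S(\alpha)|^2\,\de\alpha\right)^{1-1/t}\left(\int_0^1|K(-\alpha)|^{t}\,\de\alpha\right)^{1/t},$$
where the last factor is controlled because the even moments of $K$ count $\pm1$-weighted solutions of the equation in Lemma \ref{lpbound} and are therefore dominated by the corresponding moments of $f$, giving $\ll|\A_k|^{2t}X^{\delta_0-1}$. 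Your second moment can be repaired by the same device: apply H\"older with exponents $(t/(t-1),t)$ to the measure $|S(\alpha)|^2|S(\beta)|^2\,\de\alpha\,\de\beta$, bound $\int_0^1|f(\beta-\alpha)|^t\,\de\beta$ by Lemma \ref{lpbound} and one factor $|S(\beta)|^2$ by $\sup_\m|S|^2$; this yields $Z\ll|\A_k|X^{2-5\delta_0/t}(\log X)^{O(1)}$ and completes your program. One further remark, not specific to your write-up: both your argument and the paper's establish the threshold $|r(n,\m)|\le X^{1-\delta_1}$ stated in the outline of Section 4 (which is all that is needed against the major-arc lower bound), not the literal bound $|\A_k|X^{-\delta_1}$ appearing in the lemma statement, which neither moment estimate is strong enough to deliver when $|D|$ is much smaller than $b$.
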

\begin{proof}
 Note that by symmetry of $\m$ we have that $r(n,\m)$ is real and set $\eta(n):=\mathrm{sign}(r(n,\m))$. Now, we let 
$$K(\alpha):= \sum\limits_{\substack{0<n<X\\n\in\A}} \eta(n)e(-n\alpha)$$
and rewrite the expression we are trying to bound as
$$\sum_{n\in \A_k} |r(n,\m)| = \int_\m S(\alpha)^2 K(-\alpha)\de\alpha.$$
H\"older's inequality yields that this is at most
$$\left(\sup_{\alpha\in\m}|S(\alpha)|\right)^{2/t}\left(\int_0^1|S(\alpha)|^2\de\alpha\right)^{1-1/t}\left(\int_0^1|K(-\alpha)|^t\de\alpha\right)^{1/t}.$$
and we can now estimate each term individually. 

First, by \cite{VaughanEstimate}, we have
$$\sup_{\alpha\in\m}|S(\alpha)| \ll X^{1-3\delta_0}(\log X)^4.$$
By applying orthogonality we trivially have
$$\int_0^1 |S(\alpha)|^2\de\alpha \ll \sum_{P<p<X}(\log p)^2 \ll X\log X.$$

Now we estimate the contribution of $K$. Observe that for integer values of $t$ we know that the mean value $\int_0^1 |K(\alpha)|^{2t}\de\alpha$ counts solutions with $x_i,y_i\in\A_k$ to the additive equation
$$\sumstack{i\leq t}(x_i-y_i) = 0,$$
with each solution being weighted with $\pm 1$. Thus it is clearly less than or equal to the unweighted version, namely $\int_0^1 |f(\alpha)|^{2t}\de\alpha$, which when we take $t$ sufficiently large is itself $O(|\A_k|^{2t}X^{\delta_0-1})$ by Lemma \ref{lpbound}.

Thus in total we have
\begin{align*}
\int_0^1S(\alpha)^2K(-\alpha)\de\alpha &\ll |\A_k|X^{1-1/t+(2-6\delta_0)/t+(-1+\delta)/t}(\log X)^{8/t+1-1/t}\\
&=|\A_k|X^{1-5\delta_0/t}(\log X)^{1+7/t}
\end{align*}
implying that
$$\sumstack{n<X\\n\in\A}|r(n,\m)|\ll NX^{1-2\delta_2}$$
for some $\delta_2>0$. Now using the trivial bound, that 
$$|\{n\in\A_k:|r(n,\m)|>X^{1-\delta_1}\}|\leq X^{\delta_1-1}\sum_{n\in |\A_k|}|r(n,\m)|,$$ 
we obtain the desired result. We note that we just bounded the number of values in $\A$ with a large minor arc, and the number of even values in $\A$ with a large minor arc is clearly even less. \end{proof}
Now we deal with the major arcs.
\begin{lemma}
Given $Y$ such that  $1\leq Y\leq |\A_k|^{\delta_0}$, we have that 
$$r(n,\M) \gg XY^{-1/2}(\log X)^{-1}$$
for all even $n\in\A_k(X)\setminus \A_k(X/10)$ except for at most $O(|\A_k|^{1+\ep}Y^{-\delta_3})$ many $n$.
\end{lemma}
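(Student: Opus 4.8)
The plan is to run the standard Hardy--Littlewood major-arc analysis for the representation count $r(n,\M)$, but to measure the discrepancy between $r(n,\M)$ and its expected main term in mean square over the sparse set $\A_k$ rather than over all integers, so that the thin-set inputs of Section~3 can be brought to bear.

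First I would record the classical pointwise approximation to $S$ on $\M$: for $\alpha=a/q+\beta$ with $(a,q)=1$, $0<q<X$ and $\norm{q\alpha}<X^{6\delta_0-1}$, one has $S(\alpha)=\tfrac{\mu(q)}{\phi(q)}\,v(\beta)+\calE(\alpha)$, where $v(\beta)=\sum_{X^{6\delta_0}<m\le X}e(m\beta)$ and $\calE$ is the prime-counting error controlled by the Siegel--Walfisz theorem (these are the ``commonly known facts about $S$''). Squaring and integrating against $e(-n\alpha)$ over $\M$ then gives $r(n,\M)=\mathfrak S(n)\,J_n+\Delta(n)$, with $J_n=\int_{\abs\beta<X^{6\delta_0-1}}v(\beta)^2e(-n\beta)\de\beta$ the singular integral and $\mathfrak S(n)=\sum_{q<X}\tfrac{\mu(q)^2}{\phi(q)^2}c_q(n)$ the truncated singular series, $c_q$ being the Ramanujan sum. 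Since we have restricted to $n\in\A_k(X)\setminus\A_k(X/10)$, we have $n\asymp X$, whence $J_n\gg X$; and for even $n$ the elementary lower bound $\mathfrak S(n)\gg1$ holds uniformly. Thus the main term is $\gg X$, and it suffices to show that $\abs{\Delta(n)}$ exceeds a fixed fraction of it for only $O(\abs{\A_k}^{1+\ep}Y^{-\delta_3})$ of the admissible $n$; the slack $Y^{-1/2}(\log X)^{-1}$ in the stated lower bound is exactly the margin we keep back when choosing the threshold, and it is harmless because $Y\le\abs{\A_k}^{\delta_0}$.

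The heart of the argument is a mean-square bound of the shape $\sum_{n\in\A_k}\abs{\Delta(n)}^2\ll\abs{\A_k}^{1+\ep}X^2Y^{-\delta_3}$. I would obtain this by opening the square and summing over $n\in\A_k$, which replaces the Dirichlet-kernel concentration one gets when summing over all $n\le X$ by the generating function $f$: the contribution becomes a double integral of $\abs{S(\alpha)^2-M(\alpha)}\,\abs{S(\alpha')^2-M(\alpha')}$ against $f(\alpha'-\alpha)$ over $\M\times\M$, where $M$ denotes the main-term function. Two features of $\A_k$ then enter. The mean values of $f$ from Lemma~\ref{lpbound} bound the generic contribution away from rationals, while the divisibility estimate of Lemma~\ref{sievebound} controls the resonant contribution near a rational $a/q$, where $f$ is governed by $\#\{n\in\A_k:q\mid n\}\ll\abs{\A_k}\,q^{-\log\abs{D}/\log b}$; it is this genuine decay in $q$ that lets the sum over moduli be summed and converted into the saving $Y^{-\delta_3}$. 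Chebyshev's inequality applied to the second moment then produces the exceptional set of the claimed size, and on its complement $r(n,\M)\ge\mathfrak S(n)J_n-\abs{\Delta(n)}\gg XY^{-1/2}(\log X)^{-1}$.

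The main obstacle I anticipate is precisely the non-equidistribution of $\A_k$ in residue classes. Over all integers the second moment of $\Delta(n)$ is classical, resting on the even population of each class mod $q$ together with zero-density estimates for Dirichlet $L$-functions; neither is available for the digitally restricted set. The delicate point is to quantify how a possible Siegel zero, and more generally the large-$q$ part of the major arcs, interacts with the uneven distribution of $\A_k$: the exponent $\log\abs{D}/\log b<1$ in Lemma~\ref{sievebound} is weaker than the $1/q$ one would have for all integers, so the sum over $q$ up to the pruning level $Y$ must be balanced carefully against the $L$-function input so as to leave a true power saving $Y^{-\delta_3}$ rather than merely a logarithmic gain.
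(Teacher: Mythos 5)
Your overall plan is a from-scratch major-arc analysis, whereas the paper does something far more economical: it invokes Lemma 2 of Br\"udern--Kawada--Wooley \cite{BKW} wholesale, observes that the only step of that proof which depends on the set of $n$ being counted is the discarding of those $n$ with $(n,\tilde r)>Y$ (where $\tilde r$ is the modulus of a possible exceptional character), and redoes that single count for $\A_k$ via Lemma \ref{sievebound}: $\sum_{d\mid \tilde r,\, d>Y}\#\{n\in\A_k : d\mid n\}\ll \sum_{d\mid \tilde r,\, d>Y}|\A_k|\,d^{-\log|D|/\log b}\ll |\A_k|^{1+\ep}Y^{-\log|D|/\log b}$. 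That one count is essentially the entire content of the paper's proof, and it is exactly where the exceptional-set bound $O(|\A_k|^{1+\ep}Y^{-\delta_3})$ comes from.

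Your proposal has a genuine gap, and it sits precisely at the point you yourself flag as ``delicate''. First, the pointwise approximation $S(\alpha)\approx\frac{\mu(q)}{\phi(q)}v(\beta)$ with a Siegel--Walfisz error is available only for $q\ll(\log X)^{A}$, while the major arcs \eqref{MajDef} involve all $q<X$; in the wider range one needs the Montgomery--Vaughan machinery (Dirichlet characters, zero-density estimates, and an explicit exceptional-zero term), and Siegel--Walfisz alone can never produce a power-saving exceptional set. Second, and fatally, your central claim $\sum_{n\in\A_k}|\Delta(n)|^{2}\ll|\A_k|^{1+\ep}X^{2}Y^{-\delta_3}$ with $\Delta(n)=r(n,\M)-\mathfrak{S}(n)J_n$ is not provable unconditionally: if $L(s,\tilde\chi)$ has a real zero close to $1$, then $\tilde\chi$ contributes to $r(n,\M)$ a secondary term of genuinely main-term size whose dependence on $n$ runs through $n\bmod \tilde r$, so for $n$ in unfavourable residue classes one has $\Delta(n)\gg X$ and no mean-square bound of the claimed strength can hold. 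For the same reason, your reading of the factor $Y^{-1/2}(\log X)^{-1}$ as ``slack kept back when choosing the threshold'' is wrong: it is the unavoidable loss incurred when the exceptional-zero term is absorbed into the main term and the combined quantity is bounded below for those $n$ with $(n,\tilde r)\le Y$. Relatedly, Lemma \ref{sievebound} cannot play the role you assign it: near a rational $a/q$ the size of $f$ is governed by the full distribution of $\A_k$ in residue classes mod $q$ --- a problem the paper explicitly notes is open --- not merely by the number of multiples of $q$; the lemma's upper bound on multiples suffices for the gcd count above and no more. Any repair of your argument (redefining the major-arc approximant to include the exceptional character, then lower-bounding the combined main term after discarding $n$ with large $(n,\tilde r)$) collapses back onto the BKW proof that the paper simply cites and patches.
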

\begin{proof}This follows immediately from the proof of Lemma 2 in \cite{BKW}. Note that the only step in the proof which depends on the set being counted was between equations (18) and (19) of the latter paper, where numbers $n$ such that $(n,\tilde{r})>Y$ were discarded. We replace that with
$$\sumstack{d|\tilde{r}\\ d>Y}\sumstack{n\in \A_k\\d|n} 1\ll \sumstack{d|\tilde{r}\\ d>Y} (N/d^{\log|D|/\log b}) \ll  N^{1+\ep}Y^{-\log|D|/\log b}.$$
If we now discard $O(N^{1+\ep}Y^{-\log|D|/\log b})$ many numbers, the rest of the proof follows. We note that since other values were thrown out in the rest of the proof, we cannot take $\delta_3=1-\log|D|/\log b$, and we leave it as the indeterminate $\delta_3$.\\

\end{proof}
Now that we have both the major arcs and the minor arcs, we are equipped to prove Theorem \ref{diggold}.  Taking $Y$ to be $X^{\delta_1/3}$, we obtain that there is some set $B$ such that for all $n\in B$, $r(n,\m)=o(r(n,\M))$, and 
$$|\A\setminus B| \ll|\A_k|^{1+\ep}X^{-(\delta_1\log|D|)/(3\log|b|)}+|\A_k|X^{-\delta_1}+|\A_k|^{1+\ep}X^{-\delta_1\delta_3/3}.$$ Recalling that the number of solutions to \eqref{Gold} is 
$$r(n,[0,1])=r(n,\m)+r(n,\M) \gg r(n,\M) > 0$$ 
for all other than those exceptions, we thus have that the Goldbach conjecture holds for all other values in $\A_k$.
\section{Acknowledgements}
The author's work is supported by NSF Grant DMS-2001549. The author would like to thank Trevor Wooley for considerable assistance on this paper.
\providecommand{\bysame}{\leavevmode\hbox to3em{\hrulefill}\thinspace}
\providecommand{\MR}{\relax\ifhmode\unskip\space\fi MR }
\providecommand{\MRhref}[2]{%
  \href{http://www.ams.org/mathscinet-getitem?mr=#1}{#2}
}
\providecommand{\href}[2]{#2}

\end{document}